\newcommand{\bbC}{\mathbb{C}}
\newcommand{\bbN}{\mathbb{N}}
\newcommand{\bbR}{\mathbb{R}}
\newcommand{\bbZ}{\mathbb{Z}}
\newcommand{\calL}{\mathcal{L}}
\newcommand{\calU}{\mathcal{U}}
\DeclareMathOperator{\id}{id} 
\DeclareMathOperator{\fix}{fix} 
\newcommand{\norm}[1]{\left\lVert #1 \right\rVert} 
\newcommand{\modulus}[1]{\left\lvert #1 \right\rvert} 
\theoremstyle{definition}
\newtheorem{definition}{Definition}[section]
\newtheorem{remarks}[definition]{Remarks}
\newtheorem{example}[definition]{Example}
\newtheorem{examples}[definition]{Examples}
\newtheorem{open_problem}[definition]{Open Problem}
\theoremstyle{plain}
\newtheorem{theorem}[definition]{Theorem}
\newtheorem{corollary}[definition]{Corollary}
\numberwithin{equation}{section}
\begin{document}

\title[A note on the fixed space of positive contractions]{A note on the fixed space of positive contractions}
\author{Jochen Gl\"uck}
\email{jochen.glueck@uni-passau.de}
\address{Jochen Gl\"uck, Faculty of Computer Science and Mathematics, University of Passau, Innstr.\ 33, 94032 Passau, Germany}
\date{\today}
\subjclass[2010]{47B65; 47A10}
\begin{abstract}
	We prove that, in a large class of Banach lattices, the fixed space of each commuting family of positive linear contractions is a lattice subspace. 
	As consequences, new cyclicity results for the peripheral point spectra of positive operators and semigroups are derived; we also pose an open problem that naturally occurs in this context.
	Finally, a variety of counterexamples is presented to point out some limits of our results.
\end{abstract}

\maketitle

\section{Introduction}

\subsection*{The fixed space of positive operators}

Consider a positive linear operator $T$ on a Banach lattice $E$.
There are various sufficient conditions which ensure that the \emph{fixed space} $\fix T := \ker(\id-T)$ is a lattice in its own right. 
For instance, if $T$ is \emph{contractive}, i.e., $\norm{T} \le 1$, and the norm on $E$ is \emph{strictly monotone} in the sense that $\norm{f} < \norm{g}$ whenever $0 \le f \le g$ but $f \not= g$, then it is easy to check that $\fix T$ is even a \emph{sublattice} of $E$:
for $f \in \fix T$ one has $\modulus{f} = \modulus{Tf} \le T \modulus{f}$ and hence, by the contractivity of $T$ and the strict monotonicity of the norm, $\modulus{f} = T \modulus{f}$.

If $T$ is only power bounded rather than contractive, but $E$ is a so-called \emph{KB}-space (i.e., every increasing norm bounded net in $E$ is norm convergent), then $\fix T$ might not be a sublattice of $E$, but it is still a \emph{lattice subspace} -- i.e., a lattice with respect to the order inherited from $E$ (though not necessarily with the same lattice operations as $E$).
Indeed, for $f \in \fix T$, it follows from $\modulus{f} \le T \modulus{f}$ that the sequence $(T^n\modulus{f})_{n \in \bbN_0}$ is increasing, and thus convergent to a point $h \in \fix T$;
it is straightforward to check that $h$ is the smallest upper bound of $\pm f$ within $\fix T$, so $\fix T$ is indeed a lattice.

For various further sets of conditions that ensure that the fixed space of a positive operator is a lattice we refer, for instance, to \cite[Propositions~III.8.4 and III.11.5, and the proof of Proposition~III.8.11]{Schaefer1974}, to the main result of \cite{Christianson1989}, and to \cite[Section~2]{Glueck2016} and \cite[Proposition 3.11]{Gerlach2019}.

The fact that the fixed space is a lattice subspace, or even a sublattice, is instrumental in the spectral theory of positive operators and semigroups; for examples of this, we refer to \cite[Corollary~C-III-4.3]{Arendt1986}, \cite[Theorem~3.2 and its consequences]{Glueck2016}, and \cite[Theorem~5.3]{Gerlach2019}.
We will also give two examples of such applications in Corollaries~\ref{cor:pps-single-operator} and~\ref{cor:pps-semigroup} below.

\subsection*{Contributions and organization of the article}

In this article we give a new sufficient criterion for the fixed space of a positive linear contraction $T$ to be lattice subspace of $E$.
In contrast to the situation above, we do not require $E$ to be a KB-space nor to have strictly monotone norm; instead, we assume that $E$ is \emph{monotonically complete} and has the \emph{Fatou property}. 
Those two properties are not only satisfied by every KB-space, but also by every dual Banach lattice, and by the space of continuous functions on any Stonian compact Hausdorff space (Examples~\ref{exas:monotonic-completeness-and-fatou}).

The article is organised as follows: 
In the remainder of the introduction we recall a bit of notation and terminology. 
Section~\ref{sec:geometry} contains a brief reminder of Banach lattices which have the Fatou property and are monotonically complete. 
Our main result, Theorem~\ref{thm:main}, is stated and proved in Section~\ref{sec:fixed-space}; moreover, we give two brief applications to the cyclicity of the peripheral point spectrum in this section.
The final Section~\ref{sec:examples} contains a number of counterexamples.

\subsection*{A bit of terminology}

We use the conventions $\bbN := \{1,2,\dots\}$ and $\bbN_0 := \bbN \cup \{0\}$.

Throughout, we freely use the language and theory of real and complex Banach lattices, as presented for instance in the standard monographs \cite{Aliprantis2006, Meyer-Nieberg1991, Schaefer1974, Zaanen1983}.

Let $E$ be a real Banach lattice and let $F \subseteq E$ be a vector subspace.
The space $F$ is called a \emph{sublattice} of $E$ if $\modulus{f} \in F$ for each $f \in F$;
it is called a \emph{lattice subspace} of $E$ if it is, with respect to the order inherited from $E$, a vector lattice in its own right (but possibly with different lattice operations).
It is easy to see that every sublattice is also a lattice subspace, but the converse is not true in general.

Now let $E$ be a complex Banach lattice. 
Then $E$ is the \emph{complexification} of a real Banach lattice $E_\bbR$; the latter is called the \emph{real part} of $E$.
Let $F \subseteq E$ be a vector subspace. 
We say that $F$ is \emph{invariant under complex conjugation} if, for all $f \in F$, the complex conjugate vector $\overline{f}$ is also in $F$. 
The space $F$ is called a \emph{sublattice} of $F$ if it is invariant under complex conjugation and $\modulus{f} \in F$ for each $f \in F$. 
If $F$ is closed, this is equivalent to $F$ being invariant under complex conjugation and its \emph{real part} $F_\bbR := E \cap E_\bbR$ being a sublattice of $E_\bbR$ (see for instance \cite[Remark~C.5.3]{GlueckThesis}).
Finally, we call $F$ a \emph{lattice subspace} if it is invariant under complex conjugation and the real part $F_\bbR$ is a lattice subspace of $E_\bbR$.

For more information about lattice subspaces we refer for instance to \cite{Miyajima1983} (where the term \emph{quasi-sublattice} is used instead) and \cite{Polyrakis1999}.

To keep the terminology simple, we will refer to a real Banach lattice as its own real part, such that the ``real part of a Banach lattice $E$'' is always well-defined, no matter whether the underlying scalar field of $E$ is real or complex.
A vector $f$ in a real or complex Banach lattice is called \emph{real} if it is an element of the real part of $E$.

Throughout, the underlying scalar field of a given function or sequence space is always allowed to be real or complex, unless specified otherwise.

\section{The Fatou property and monotone completeness}
\label{sec:geometry}

Let $E$ be a real or complex Banach lattice. Then $E$ is said to have the \emph{Fatou property} \cite[Definition~2.4.18(i)]{Meyer-Nieberg1991} if every increasing net $(x_j)_{j \in J} \subset E_+$ which posses a supremum $x \in E$ satisfies
\begin{align*}
	\|x\| = \sup_{j \in J} \|x_j\|.
\end{align*}
Obviously, a complex Banach lattice has the Fatou property if and only if the same is true for its real part.
Clearly, every Banach lattice with order continuous norm has the Fatou property, and so has every space of continuous scalar-valued functions on a compact Hausdorff space, endowed with the sup norm.

The Banach lattice $E$ is said to be \emph{monotonically complete} if every norm bounded increasing net in $E_+$ has a supremum \cite[Definition~2.4.18(iii)]{Meyer-Nieberg1991}. Equivalently, every norm bounded increasing net in the real part of $E$ has a supremum.

\begin{remarks}
	\begin{enumerate}[(a)]
		\item 
		If $E$ is monotonically complete, then it is also \emph{Dedekind complete} (i.e., every non-empty order bounded set in $E$ has a supremum); see \cite[Proposition~2.4.19(i)]{Meyer-Nieberg1991}.
		
		The converse implication does not holds, though:
		The space $c_0$ of scalar-valued sequences that converge to $0$ is Dedekind comoplete but not monotonically complete.
		
		\item 
		Monotonic completeness is logically independent from having the Fatou property:
		
		The space $c_0$ has the Fatou property because it has order continuous norm; however, it is not monotonically complete. 
		To obtain another class of examples, one easily checks that for every compact Hausdorff space $K$ the space $C(K)$ of continuous scalar-valued functions on $K$ has the Fatou property; however, if $K$ is not Stonian, then $C(K)$ is not order complete and thus, it cannot be monotonically complete. 
		
		Conversely, an example of a Banach lattice which is monotonically complete, but does not have the Fatou property can be found in \cite[Example (ii) on p.\,98]{Meyer-Nieberg1991} (compare however \cite[Proposition~2.4.19(i)]{Meyer-Nieberg1991}).
	\end{enumerate}
\end{remarks}

Our main result in the next section holds on Banach lattices which are both monotonically complete and have the Fatou property. In the subsequent examples we list some classes of Banach lattices which have both these properties.
For all these spaces the Fatou property and the monotonic completeness are very easy to check, so we omit a detailed proof.

\begin{examples} 
	\label{exas:monotonic-completeness-and-fatou}
	Let $E$ be a (real or complex) Banach lattice. Each of the following properties is sufficient for $E$ to be monotonically complete and have the Fatou property:
	\begin{itemize}
		\item[(a)] We have $E = C(K)$ for a compact Hausdorff space $K$, and $E$ is Dedekind complete (i.e., $K$ is Stonian).
		
		\item[(b)] The space $E$ is a KB-space.
		
		\item[(c)] The space $E$ has a pre-dual Banach lattice, i.e., there is a Banach lattice $F$ such that $E = F'$.
	\end{itemize}
\end{examples}

\section{The fixed space of a family of positive contractions} 
\label{sec:fixed-space}

The following is our main result.

\begin{theorem}
	\label{thm:main}
	Let $E$ be a real or complex Banach lattice which is monotonically complete and has the Fatou property.
	Let $T = (T_i)_{i \in I}$ be a family of positive and contractive (i.e., $\norm{T_i} \le 1$) linear operators on $E$ which mutually commute. 
	Then the \emph{fixed space}
	\begin{align*}
		F := \fix T := \{x \in E: \, T_i x = x \text{ for all } i \in I\}
	\end{align*}
	of $T$ is a closed vector subspace of $E$ with the following properties:
	\begin{enumerate}[\upshape (a)]
		\item
		The space $F$ is a lattice subspace of $E$.

		\item
		The space $F$ is a Banach lattice with respect to the norm induced by $E$.
		
		\item
		The Banach lattice $F$ is monotonically complete and has the Fatou property. 
		In particular, $F$ is Dedekind complete.

		\item 
		Let $G \subseteq F$ be a set of real vectors which has a supremum $g_E$ in $E$.
		Then $G$ has also a supremum $g_F$ in $F$, and we have $g_E \le g_F$.
		If $g_E \ge 0$, then $\|g_F\| = \|g_E\|$.
	\end{enumerate}
\end{theorem}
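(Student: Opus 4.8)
\emph{Reduction and set-up.} The plan is to prove everything in the real case and obtain the complex case by passing to real parts, since $F$ is invariant under complex conjugation (each $T_i$ is the complexification of a real operator and hence commutes with conjugation) and $F_\bbR = F\cap E_\bbR = \fix\big(T\restricted{E_\bbR}\big)$. That $F$ is a closed subspace is immediate: it is the intersection of the kernels of the bounded operators $\id-T_i$. Let $\calS$ be the commutative semigroup with identity generated by $(T_i)_{i\in I}$; each $S\in\calS$ is a positive contraction. Fix a real $f\in F$ and set $x:=\modulus{f}\ge 0$. From $x=\modulus{T_i f}\le T_i\modulus{f}=T_i x$ and commutativity one gets $Sx\le T_i Sx$ for every $S$, so the orbit $(Sx)_{S\in\calS}$ is an increasing, norm-bounded (by $\norm{x}$) net; monotone completeness yields $h:=\sup_{S\in\calS}Sx\in E$, and the Fatou property gives $\norm{h}=\sup_S\norm{Sx}=\norm{x}$.

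\emph{The easy directions.} Clearly $h\ge x$. If $y\in F$ is any upper bound of $\{f,-f\}$ in the inherited order, then $y$ is a fixed point with $y\ge x$, so $Sx\le Sy=y$ for all $S$ and hence $y\ge h$; thus $h$ lies below every fixed majorant of $x$. Moreover $T_i h\ge T_i Sx=(T_i S)x$ for all $S$, and since $\{T_i S:S\in\calS\}$ is cofinal in $\calS$ this gives $T_i h\ge h$, whence $\norm{T_i h}=\norm{h}$ by contractivity and monotonicity of the norm. Consequently, \emph{once} we know $h\in F$, it is automatically the least fixed majorant of $x$, i.e.\ $h=f\vee_F(-f)$.

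\emph{The main obstacle: $h\in F$.} This is where the hypotheses must be used essentially, for $T_i h\ge h$ together with $\norm{T_i h}=\norm{h}$ does \emph{not} force equality in the absence of order continuity of $T_i$ and of strict monotonicity of the norm. A clean route to fixedness is to consider the set $M_x:=\{y\ge x:\ T_i y\le y\text{ for all }i\}$ of \emph{superfixed} majorants: it is closed under $\wedge$ (positivity of the $T_i$) and bounded below by $x\ge 0$, so $k:=\inf M_x$ exists by Dedekind completeness and lies in $M_x$; the downward directed orbit $(Sk)_{S\in\calS}$ satisfies $Sk\le k$, its infimum $\inf_S Sk$ is again a member of $M_x$ and is $\le k$, so minimality forces $\inf_S Sk=k$, which in turn forces $Sk=k$ and in particular $T_i k=k$, so $k\in F$. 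The genuine difficulty is then the \emph{existence} of even one superfixed (equivalently, one fixed) majorant of $x$ — equivalently, that $\{f,-f\}$ is bounded above in $F$ — and it is precisely this step that has to draw on monotone completeness and the Fatou property together. I expect this to be the hard part, and I would attack it either by testing the orbit net against the order-continuous functionals (which norm $E$ by the Fatou property) and controlling, by contractivity, the singular part of the adjoint images $T_i'\psi$, or by producing a common fixed point of the biduals $T_i''$ above $x$ in $E''$ via a Markov--Kakutani argument on the weak$^*$-compact convex invariant set $\{\eta\ge x:\ \norm{\eta}\le\norm{x}\}$ and pulling it back into $E$.

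\emph{Deducing (a)--(d).} Granting $h\in F$ (so $h=k$ is the least fixed majorant of $\modulus{f}$), the rest follows. For (a), $f\vee_F(-f)=h$ exists for every real $f$, so $F_\bbR$ is a vector lattice in the inherited order, i.e.\ a lattice subspace; the complex case follows via real parts. For (b), $F$ is complete as a closed subspace of $E$, and since $\pm f\le\modulus{f}_F$ gives $\modulus{f}_E\le\modulus{f}_F$ in $E$ while $\norm{\modulus{f}_F}=\norm{\modulus{f}_E}=\norm{f}$ by the identity above, the inherited norm is a lattice norm for the intrinsic order, so $F$ is a Banach lattice. For (c), given an increasing norm-bounded net in $F_+$, its supremum $z$ in $E$ exists and is subfixed ($T_i z\ge z$), so the construction produces its supremum in $F$, and the Fatou identity in $E$ transfers to $F$; this yields monotone completeness and the Fatou property of $F$, and hence Dedekind completeness. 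Finally (d) is exactly this construction in general form: for real $G\subseteq F$ with $g_E=\sup_E G$, positivity gives $g_E\le T_i g_E$, so the least fixed majorant $g_F$ of $g_E$ exists, equals $\sup_F G$, and satisfies $g_E\le g_F$; and if $g_E\ge 0$ then every $Sg_E\ge 0$ has norm $\norm{g_E}$, so $\norm{g_F}=\norm{g_E}$ by the Fatou property applied to the increasing orbit $(Sg_E)_{S\in\calS}$.
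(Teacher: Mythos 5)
There is a genuine gap, and you have located it yourself: the assertion that the orbit supremum $h=\sup_{S\in\calS}Sx$ lies in $F$ (equivalently, that $x=\modulus{f}$ has at least one fixed, or sub-fixed, majorant, i.e.\ that your set $M_x$ is nonempty) is never proved. Everything else is correct --- the closedness of $F$, the reduction to real parts, the construction of $h$ as a norm-controlled super fixed vector, the observation that a minimal element of $M_x$ is automatically fixed, and the deduction of (a)--(d) once a least fixed majorant with controlled norm is available --- but the decisive step is accompanied only by two attack sketches, neither of which works as stated. The first (``the order-continuous functionals norm $E$ by the Fatou property'') rests on a false premise: monotone completeness together with the Fatou property does not force $E$ to have any nonzero order continuous functionals at all; for instance $C(K)$ for a Stonian but non-hyperstonian $K$ falls under Examples~\ref{exas:monotonic-completeness-and-fatou}(a) and has trivial order-continuous dual. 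The second (Markov--Kakutani applied to the $T_i''$ on $\{\eta\in E'': \eta\ge x,\ \norm{\eta}\le\norm{x}\}$) does give a common fixed point in $E''$, but the crucial ``pulling it back into $E$'' has no justification under the stated hypotheses --- and that is precisely where all the difficulty sits.

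The paper closes exactly this gap by arguing in the direction opposite to your $M_x$-infimum: instead of descending from a sub-fixed majorant whose existence is unknown, it ascends via Zorn's lemma inside the set $H$ of real vectors $h$ that are super fixed in the paper's sense ($T_ih\ge h$ for all $i$), lie between $G$ and the set of upper bounds of $G$ within $F$, and satisfy $\norm{h}\le\norm{g_E}$. This set is nonempty because it contains $g_E$ itself; every chain in $H$ has a supremum by monotone completeness, and that supremum stays in $H$ because the Fatou property preserves the norm bound; finally, a maximal element of $H$ is automatically a fixed vector, since $T_i$ maps $H$ into itself and increases its elements. Your argument that the minimal element of $M_x$ is fixed is the mirror image of this maximality trick, so your proof can be repaired by replacing the unproved nonemptiness of $M_x$ with this Zorn argument; note that the Zorn construction also delivers the bound $\norm{g_F}\le\norm{g_E}$, which your final norm identity in (d) implicitly needs in the case where $h$ itself fails to be fixed and the least fixed majorant is strictly larger than $h$.
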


\begin{proof}
	We first prove (d). Afterwards, we will see that the remaining assertions follow easily from~(d).
	
	(d)
	Let $G \subseteq F$ and $g_E$ be as in the claim. Denote the set of all upper bounds of $G$ within $F$ by $U$ (but note that we do not know a priori that $U$ is non-empty).
	We consider the set $H$ of all real vectors $h \in E$ that satisfy the following properties:
	\begin{enumerate}[(1)]
		\item 
		The vector $h$ is bounded below by $G$ and bounded above by $U$.
		
		\item 
		The vector $h$ is a \emph{super fixed vector} of $T$, i.e., $T_i h \ge h$ for all $i \in I$.
		
		\item 
		We have $\norm{h} \le \norm{g_E}$.
	\end{enumerate}
	The set $H$ is non-empty since it contains $g_E$.
	Let us show that $H$ is invariant under each operator $T_i$. So fix $h \in H$ and $i \in I$.
	Then $T_i h$ satisfies~(1) since $T_i$ is positive and $G$ and $U$ consist of fixed vectors of $T_i$. The vector $T_i h$ also satisfies~(2) since $T_i$ is positive and all operators in the family $T$ commute. Finally, $T_i h$ satisfies~(3) since $\norm{T_i h} \le \norm{h} \le \norm{g_E}$, as $T_i$ is contractive.
	
	Next we show that $H$ has a maximal element. Let $C \subseteq H$ be a non-empty chain in $H$; by Zorn's lemma it suffices to prove that $C$ has an upper bound in $H$.
	Since the norms of all vectors in $C$ are bounded by $\norm{g_E}$, it follows from the monotonic completeness of $E$ that $C$ has a supremum $c$ in $E$. 
	Moreover, $\norm{c} \le \norm{g_E}$ since $E$ has the Fatou property.
	It is clear that $c$ is also bounded below by $G$ and above by $U$.
	Finally, as $c$ is a supremum of super fixed vectors of $T$, it is itself of super fixed vector, too. 
	Hence, $c \in H$, so $c$ is indeed an upper bounded of $C$ within $H$.
	
	Now, let $g_F$ be a maximal element of $H$.
	For each index $i \in I$, the vector $T_i g_F$ is in $H$ due to the $T_i$-invariance of $H$, and we have $T_i g_F \ge g_F$ by property~(2) in the definition of $H$;
	hence, $T_i g_F = g_F$ due to the maximality of $g_F$.	
	Thus, $g_F$ is an element of $F$. 
	It is also an upper bounded of $G$ and smaller than each further upper bound of $G$ within $F$; this proves that $g_F$ is the supremum of $G$ within $F$.
	
	Obviously, $g_E \le g_F$. Since $g_F$ is an element of $H$, it satisfies $\norm{g_F} \le \norm{g_E}$. If $g_E$ is positive, then the inequality $g_E \le g_F$ implies that we also have $\norm{g_E} \le \norm{g_F}$, so $\norm{g_E} = \norm{g_F}$.
	
	(a) 
	Let $f \in F$ be a real vector. 
	Then it follows from~(d) that $\{f, -f\}$ has a supremum $\modulus{f}_F$ in $F$. 
	Moreover, if the scalar field is complex, then $F$ is invariant under complex conjugation since all operators $T_i$ commute with complex conjugation (this is a consequence of the positivity of the $T_i$).
	Hence, $F$ is a lattice subspace of $E$, both in the real and in the complex case.
	
	(b)
	First, let $f \in F$ be a real vector. 
	The the modulus $\modulus{f}_F$ of $f$ within $F$, which is the supremum of $f$ and $-f$ within $F$, has the same norm as $f$ according to~(d).
	Hence, the real part of $F$ is a real Banach lattice with respect to the norm induced by $E$.
	
	We still need to show that, if the scalar field is complex, the norm induced by $E$ coincides with the Banach lattice complexification of the norm on the real part of $F$.
	So let $f = f_1 + if_2 \in F$ for real vectors $f_1, f_2 \in F$. 
	Let $\modulus{f}_E$ and $\modulus{f}_F$ denote the moduli of $f$ in $E$ and $F$, respectively.
	These are the suprema of the set
	\begin{align*}
		\{f_1 \cos \theta + f_2 \sin \theta : \, \theta \in [0,2\pi) \}
	\end{align*}
	in $E$ and $F$, respectively.
	Hence, it follows from~(d) that $\norm{\modulus{f}_F} = \norm{\modulus{f}_E} = \norm{f}$.
	
	(c) 
	Since $E$ is monotonically complete, it follows from~(d) that $F$ is monotonically complete, too.
	In particular, $F$ is Dedekind complete. 
	Using the fact that $E$ has the Fatou property, and again~(d), we see that $F$ has the Fatou property, too.
\end{proof}

\begin{remarks}
	\label{rems:main-theorem}
	\begin{enumerate}[(a)]
		\item 
		If, in the situation of Theorem~\ref{thm:main}, $E$ is an AM-space, then it follows from~(d) that $F$ is an AM-space, too. 
		
		\item 
		Theorem~\ref{thm:main} is thus a generalisation of \cite[Theorem~2.1 and Corollary~2.2]{Glueck2016}, where the same result (though with slightly less detailed assertions) was shown for the special case of single Markov operators on Dedekind complete $C(K)$-spaces.
		The main idea of the proof -- invoking Zorn's lemma for an appropriately chosen set of super fixed points -- was similar there.
		
		\item 
		Under the assumptions of Theorem~\ref{thm:main}, the lattice subspace $F$ need not be a sublattice, in general. 
		A counterexample can be found in \cite[Example~2.3(a)]{Glueck2016}.
		
		\item 
		Let us consider the situation in Theorem~\ref{thm:main}(d), but assume now that the operator family $(T_i)_{i \in I}$ consists of a single operator $T$ only.
		Instead of using Zorn's lemma in the proof of~(d), one can also argue by transfinite induction. 
		It is instructive to discuss this in a bit more detail:
		
		We have $Tg_E \ge g_E$, and by iterating this inequality we obtain that the sequence $(T^n g_E)_{n \in \bbN_0}$ is increasing; since $T$ is contractive, the sequence is also norm bounded by the norm of $g_E$.
		Hence, since $E$ is monotonically complete, the sequence $(T^n g_E)_{n \in \bbN_0}$ has a supremum $g_1$ in $E$, whose norm is dominated by $\norm{g_E}$ since $E$ has the Fatou property.
		If $T$ were order continuous, $g_1$ would be a fixed point of $T$ and it would readily follow that $g_1$ is the supremum of $G$ within $F$.
		Since, however, $T$ is not assumed to be order continuous, $g_1$ will only be a super fixed point of $T$, in general.
		Hence, we now consider the sequence $(T^n g_1)_{n \in \bbN_0}$, which is again increasing and norm bounded by $\norm{g_E}$; thus, it has a supremum $g_2$.
		By iterating this procedure transfinitely often, we arrive at a fixed point of $T$ which turns out to be the supremum of $G$ within the fixed space $F$.
		(Note that the entire procedure terminates due to cardinality reasons.)
		
		This transfinite argument also gives a very clear idea of why contractivity of $T$ (rather than only power boundedness) and the Fatou property of $E$ are important: 
		we need these assumptions to ensure that the chains that occur in the transfinite induction remain norm bounded.
		The discussion after Example~\ref{ex:power-bbd-is-not-sufficient-for-cyclic-pps} below demonstrates how this fails if the operator is merely power bounded.
		
		\item 
		Almost the same argument as in the proof of Theorem~\ref{thm:main}(d) shows the following result (under the assumptions of the theorem):
		
		If $g \in E$ is a real vector which is a super fixed point of the family $T = (T_i)_{i \in I}$, then there exists a smallest fixed vector $f \in F$ which satisfies $g \le f$; moreover, if $g \ge 0$, then $\norm{f} = \norm{g}$.
	\end{enumerate}
\end{remarks}

Let us briefly discuss a few applications of Theorem~\ref{thm:main} to the spectral theory of positive operators and operator semigroups.

\begin{corollary}
	\label{cor:pps-single-operator}
	Let $E$ be a complex Banach lattice which is monotonically complete and has the Fatou property.
	Let $T$ be a positive and contractive linear operator on $E$.
	If $\lambda \in \bbC$ is a root of unity and an eigenvalue of $T$, then so is $\lambda^k$ for each $k \in \bbZ$; even more, one has the dimension estimate
	\begin{align*}
		\dim \ker(\lambda - T) \le \dim \ker(\lambda^k - T)
	\end{align*}
	for the corresponding eigenspaces (where we do not distinguish between different infinite cardinalities, i.e., the dimension is understood to be either an integer or $\infty$).
\end{corollary}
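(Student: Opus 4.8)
The plan is to reduce the statement to the fixed space of a power of $T$ and then to a multiplication-operator argument inside an AM-ideal. Pick $n \in \bbN$ with $\lambda^n = 1$. Then $T^n$ is again a positive contraction, so Theorem~\ref{thm:main} applied to the single operator $T^n$ shows that $F := \fix(T^n)$ is a Dedekind complete complex Banach lattice (monotonically complete and with the Fatou property) and a lattice subspace of $E$. Every $n$-th root of unity $\mu$ satisfies $\ker(\mu - T) \subseteq \ker(\mu^n - T^n) = F$, and $T$ leaves $F$ invariant because it commutes with $T^n$; writing $S := T\restricted{F}$, I therefore have $\ker(\mu - T) = \ker(\mu - S)$ for every such $\mu$. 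As $S$ is positive with $S^n = \id_F$, its inverse $S^{n-1}$ is positive as well, so $S$ is a lattice isomorphism of $F$. The task thus becomes the purely lattice-theoretic assertion that a lattice isomorphism $S$ of a Dedekind complete complex Banach lattice $F$ with $S^n = \id_F$ has, for each $n$-th root of unity $\lambda$ that is an eigenvalue, all powers $\lambda^k$ as eigenvalues, with $\dim\ker(\lambda - S) \le \dim\ker(\lambda^k - S)$.

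The structural tool is localization to AM-ideals generated by fixed vectors. If $x \in \ker(\lambda - S)$ then $S\modulus{x} = \modulus{Sx} = \modulus{x}$, so $\modulus{x} \in \fix(S)$. For any positive $e \in \fix(S)$ the order ideal $F_e$ generated by $e$, with its gauge norm, is an AM-space with unit $e$, hence by Kakutani's theorem lattice isometric to a space $C(K)$ in which $e$ becomes the constant function $\one$; moreover $F_e$ is $S$-invariant (as $Se = e$), and $S$ restricts to a unital lattice isomorphism of $C(K)$, that is, to an \emph{algebra} isomorphism, realised as composition with a homeomorphism of $K$ of order dividing $n$. Inside $F_{\modulus{x}}$ the element $x$ has modulus equal to the unit, so it is unimodular and its integer powers $x^k$ are well defined; they satisfy $S(x^k) = \lambda^k x^k$ and $\modulus{x^k} = \modulus{x} \neq 0$, with $x^{-1} = \overline{x}$ for negative exponents. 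This already exhibits, for every $k \in \bbZ$, a nonzero eigenvector for $\lambda^k$ and proves the first assertion.

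For the dimension estimate the naive map $x \mapsto x^k$ must be discarded because it is not linear; instead I multiply by one fixed unimodular $\lambda$-eigenvector. It suffices to embed each finite-dimensional subspace $W \subseteq \ker(\lambda - S)$ injectively and linearly into $\ker(\lambda^k - S)$. Choose a basis $x_1, \dots, x_d$ of $W$, put $e := \modulus{x_1} \vee \dots \vee \modulus{x_d} \in \fix(S)$ (finite suprema are preserved by the lattice homomorphism $S$), and pass to the AM-ideal $F_e \cong C(K)$ of the previous paragraph, where $K$ is Stonian because $F_e$ inherits Dedekind completeness from $F$. The moduli $\modulus{x_j}$ are $S$-invariant, so the open sets on which $\modulus{x_j}$ exceeds half of $e$ are invariant under the homeomorphism $\phi$ implementing $S$, and they cover $K$ since $\max_j \modulus{x_j} = e$. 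By compactness and total disconnectedness of $K$, I refine this cover to a finite, $\phi$-invariant, clopen partition; on each piece the dominating $x_j$ is bounded away from $0$, so its signum $\sgn(x_j) = x_j / \modulus{x_j}$ is a well-defined continuous unimodular $\lambda$-eigenvector there.

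Assembling these local signums by the band projections onto the clopen pieces — which commute with $S$ because the pieces are $\phi$-invariant — produces a single $u \in F_e$ with $\modulus{u} = e$ and $Su = \lambda u$; this gluing via disjoint clopen projections, in place of a continuous matching of phases, is exactly where Dedekind completeness is indispensable. Within the algebra $C(K)$ the power $u^{k-1}$ is then a unimodular $\lambda^{k-1}$-eigenvector, and multiplication by it is a bounded linear operator on $F_e$ that is injective (its inverse is multiplication by $\overline{u}^{\,k-1}$) and sends $W$ into $\ker(\lambda^k - S)$, since $S(u^{k-1} y) = \lambda^{k-1} u^{k-1}\cdot\lambda y = \lambda^k(u^{k-1}y)$. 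Letting $W$ exhaust $\ker(\lambda - S)$ yields $\dim\ker(\lambda - S) \le \dim\ker(\lambda^k - S)$, which translates back to the asserted estimate for $T$. The genuine obstacle is precisely this dimension estimate: the non-linearity of taking powers forces the detour through a single global unimodular eigenvector, whose construction rests on turning the cover by the supports of the $x_j$ into a finite $\phi$-invariant clopen partition of the Stonian spectrum — the very structure that Theorem~\ref{thm:main} supplies.
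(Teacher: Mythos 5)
Your argument is correct, and its skeleton is exactly the paper's: choose $n$ with $\lambda^n=1$, reduce to $F=\fix(T^n)$, invoke Theorem~\ref{thm:main} to make $F$ a (Dedekind complete) Banach lattice, and note that $S:=T\restricted{F}$ is invertible with positive inverse $S^{n-1}$, hence a lattice isomorphism, so that all eigenspaces $\ker(\lambda^k-T)$ live in $F$ and coincide with $\ker(\lambda^k-S)$. Where you genuinely depart from the paper is the last step: the paper disposes of the dimension estimate for the lattice isomorphism $S$ by citing the known estimate for lattice homomorphisms (\cite[Proposition~3.1]{Glueck2016}), whereas you prove the special case you need ($S^n=\id_F$ on a Dedekind complete lattice) from scratch -- localizing to the AM-ideal generated by $e=\modulus{x_1}\vee\dots\vee\modulus{x_d}\in\fix(S)$, representing it as $C(K)$ with $S$ a composition operator by a homeomorphism $\phi$ of finite order, building a global unimodular eigenvector $u$ with $Su=\lambda u$ by gluing local signums over a $\phi$-invariant clopen partition, and then using the linear injection $y\mapsto u^{k-1}y$ into $\ker(\lambda^k-S)$; this correctly sidesteps the non-linearity of $y\mapsto y^k$ and, under the paper's convention on dimensions, the exhaustion by finite-dimensional $W$ gives the estimate. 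One step is stated too tersely: ``compactness and total disconnectedness'' alone do not produce a $\phi$-invariant clopen refinement, since a generic clopen refinement of the invariant open cover need not be invariant; the fix is immediate, either by taking closures of the invariant open sets $\{\modulus{x_j}>e/2\}$ (clopen because $K$ is Stonian, and invariant) and disjointifying, or by replacing each clopen piece $V\subseteq U_j$ by $\bigcup_{i=0}^{n-1}\phi^i(V)$, which stays inside the invariant set $U_j$ because $\phi^n=\id$. In short: the paper's citation buys brevity and a statement valid for general lattice homomorphisms; your route buys a self-contained proof of precisely the instance needed here, at the cost of the Kakutani/composition-operator machinery.
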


Results of the type ``If a complex number $\lambda$ of modulus $1$ is an eigenvalue of the positive operator $T$, then so is $\lambda^k$ for each integer $k$'' are called \emph{cyclicity results} in Perron--Frobenius theory. 
Such results results are only true under appropriate assumptions on the operator and the underlying space (see for instance \cite[Examples~5.4 and~6.2]{Glueck2016} for two counterexamples which show that cyclicity fails, in general).
Numerous sets of assumptions are known which imply such cyclicity results; we refer for instance to \cite[Section~3]{Scheffold1971}, \cite[Section~V.4]{Schaefer1974}, \cite[Section~4]{Grobler1995}, and \cite[Sections~5 and~6]{Glueck2016}.
Recent applications of cyclicity results in ergodic theory can be found in \cite[Section~3]{Cohen2018} and \cite[Proposition~5.2]{Eisner2018}.

We point out that Corollary~\ref{cor:pps-single-operator} is already known in the special case where $T$ is a Markov operator on a Dedekind complete $C(K)$-space \cite[Proposition~6.4]{Glueck2016}.
Corollary~\ref{cor:pps-single-operator} in the general version above can be proved by almost the same argument -- one merely needs to replace the reference to \cite[Corollary~2.2]{Glueck2016} in the proof of \cite[Proposition~6.4]{Glueck2016} with a reference to Theorem~\ref{thm:main} of the present article.
Still, we prefer to include the entire argument since it demonstrates nicely how Theorem~\ref{thm:main} is used.

\begin{proof}[Proof of Corollary~\ref{cor:pps-single-operator}]
	Choose an integer $n \ge 1$ such that $\lambda^n = 1$, and consider the space
	\begin{align*}
		F := \fix(T^n).
	\end{align*}
	This is a closed an $T$-invariant vector subspace of $E$, and it contains the eigenspace $\ker(\lambda - T)$; hence, $\ker(\lambda - T) = \ker(\lambda - T|_F)$.
	In particular, $\lambda$ is an eigenvalue of the restriction $T|_F$.
	Moreover, Theorem~\ref{thm:main} shows that $F$ is a Banach lattice with respect to the order and norm inherited from $E$. Since we have
	\begin{align*}
		T|_F (T|_F)^{n-1} = (T|_F)^{n-1} T|_F = (T^n)|_F = \id_F,
	\end{align*}
	it follows that $T|_F$ is a bijective operator on $F$, and its inverse $(T|_F)^{n-1}$ is positive.
	Hence, $T|_F$ is even a lattice isomorphism on the complex Banach lattice $F$.
	
	But for lattice homomorphisms, the claimed dimension estimate is known. 
	More precisely, we have for each integer $k$
	\begin{align*}
		\dim \ker(\lambda - T) = \dim \ker(\lambda - T|_F) \le \dim \ker (\lambda^k - T|_F) \le \dim \ker(\lambda^k - T),
	\end{align*}
	where the estimate in the middle is a property of lattice homomorphisms that can be found in \cite[Proposition~3.1]{Glueck2016}.
	So the claimed dimension estimate holds; in particular, $\lambda^k$ is an eigenvalue of $T$.
\end{proof}

The proof of Corollary~\ref{cor:pps-single-operator} relies heavily on the assumption that $\lambda$ be a root of unity. 
Thus, the following question naturally arises:

\begin{open_problem}
	Does the assertion of Corollary~\ref{cor:pps-single-operator} remain true if $\lambda$ is not a root of unity, but more generally a complex number of modulus $\modulus{\lambda} = 1$?
\end{open_problem}

On a similar note, one can prove a cyclicity result about the peripheral point spectrum of semigroup generators.
To understand the following corollary, familiarity with the theory of $C_0$-semigroups is required. 
An excellent treatment of this theory can, for instance, be found in \cite{Engel2000}.

\begin{corollary}
	\label{cor:pps-semigroup}
	Let $E$ be a complex Banach lattice which is monotonically complete and has the Fatou property.
	Let $(T_t)_{t \in [0,\infty)}$ be a positive and contractive $C_0$-semigroup on $E$ with generator $A$.
	If $i\beta \in i\bbR$ is an eigenvalue of $A$, then so is $ik\beta$ for each integer $k \in \bbZ$.	 
\end{corollary}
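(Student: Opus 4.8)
The plan is to reduce, via Theorem~\ref{thm:main}, to an AM-space on which the semigroup consists of Markov operators, and then to manufacture the cyclic eigenvectors by a pointwise ``phase transport'' argument; the essential point is that the full one-parameter family $(e^{i\beta t})_{t \ge 0}$ lets one reach \emph{every} integer power $k$, rather than only those forced by a root of unity as in Corollary~\ref{cor:pps-single-operator}. Concretely, I would fix an eigenvector $x \ne 0$ with $Ax = i\beta x$, so that $T_t x = e^{i\beta t} x$ for all $t \ge 0$, whence $\modulus{x} = \modulus{T_t x} \le T_t \modulus{x}$; thus $u := \modulus{x}$ is a real super fixed vector of the commuting, positive, contractive family $(T_t)_{t \ge 0}$. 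By Remark~\ref{rems:main-theorem}(e) there is a smallest fixed vector $w \in F := \fix((T_t)_{t \ge 0})$ with $u \le w$ and $\norm{w} = \norm{u}$. I would then pass to the principal ideal $E_w$, equipped with the order unit norm $\norm{\,\cdot\,}_w$; since $w$ is fixed and each $T_t$ is positive, the order interval $[-w,w]$ is $T_t$-invariant, so $(T_t)$ restricts to a semigroup $(S_t)$ of positive, $\norm{\,\cdot\,}_w$-contractive operators with $S_t w = w$. As an AM-space with unit $w$, the ideal $E_w$ is isometrically lattice isomorphic to a space $C(K)$ with $w = \one$, the $S_t$ become Markov operators with pointwise representation $S_t f(s) = \int_K f \dx \mu^t_s$ by probability measures, and $x \in E_w$ (because $\modulus{x} = u \le w$) remains an eigenvector, $S_t x = e^{i\beta t} x$.

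Next I would carry out the phase argument on $C(K)$. Normalising $\norm{x}_w = 1$, on the maximal modulus set $K_0 := \{s : \modulus{x(s)} = 1\}$ the equalities $\modulus{\int_K x \dx \mu^t_s} = 1 = \int_K \modulus{x} \dx \mu^t_s$ force $\mu^t_s$ to be concentrated on $K_0 \cap \{r : x(r) = e^{i\beta t} x(s)\}$. Consequently the candidate functions $x_k := \modulus{x} \, (\sgn x)^k$ — explicitly $x_k := x^k / \modulus{x}^{k-1}$ on $\{x \ne 0\}$ and $x_k := 0$ elsewhere, which lies in $C(K)$ since $\modulus{x_k} = \modulus{x}$ and coincides with $x^k$ on $K_0$ — satisfy the eigenequation $S_t x_k = e^{ik\beta t} x_k$ at every point of $K_0$, for all $t \ge 0$ and all $k \in \bbZ$. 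Granting that this eigenequation holds on all of $K$, I would obtain $x_k \ne 0$ with $T_t x_k = e^{ik\beta t} x_k$ for all $t \ge 0$ inside $E_w \subseteq E$; the standard correspondence for $C_0$-semigroups (a vector $y$ with $T_t y = e^{i\gamma t} y$ for all $t$ lies in $D(A)$ with $Ay = i\gamma y$) then yields $ik\beta \in \specPnt(A)$, as claimed.

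The main obstacle is exactly the passage from $K_0$ to all of $K$: the representing measures $\mu^t_s$, and hence the eigenequation for $x_k$, are pinned down only for base points $s \in K_0$, whereas $\modulus{x}$ is in general merely super fixed (so $K_0$ may be a proper subset of $K$, reflecting that $u = \modulus{x}$ need not equal its fixed majorant $w$). Resolving this is where the one-parameter structure must be used in earnest — for instance by exploiting $S_t u \uparrow w$ together with Dini's theorem and the semigroup law to propagate the eigenequation for $x_k$ from $K_0$ to all of $K$, or by replacing $C(K)$ with the smallest closed $(S_t)$-invariant sublattice on which $\modulus{x}$ is a unit and then invoking the classical cyclicity analysis for Markov semigroups on $C(K)$-spaces (as in \cite[Section~V.4]{Schaefer1974} and \cite[Corollary~C-III-4.3]{Arendt1986}). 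This step, rather than the lattice reduction furnished by Theorem~\ref{thm:main}, is the technical heart of the corollary.
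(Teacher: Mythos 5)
Your argument is not complete, and you say so yourself: everything hinges on the claim that $x_k$ satisfies $S_t x_k = e^{ik\beta t}x_k$ on \emph{all} of $K$, but your phase-transport argument pins down the representing measures $\mu^t_s$ only at points $s$ of the maximal-modulus set $K_0$, and the passage from $K_0$ to $K$ is left as an acknowledged obstacle. The patches you suggest do not close it. In particular, the relation ``$S_t u \uparrow w$'' is not available: $w$ is the smallest fixed vector above $u = \modulus{x}$, produced by Zorn's lemma (Remark~\ref{rems:main-theorem}(e)) or by a genuinely transfinite iteration (Remark~\ref{rems:main-theorem}(d) and Example~\ref{ex:necessity-of-transfinite-construction}); the supremum $\sup_{t} T_t u$ is in general only a super fixed vector, need not equal $w$, and $T_t u$ need not even be monotone in $t$, so a Dini-type argument has nothing to bite on. Note also that a naive sign/phase argument for Markov operators on a $C(K)$-space cannot work without further input, since cyclicity genuinely fails in that generality (the counterexamples cited after Corollary~\ref{cor:pps-single-operator}); so the missing step really is the mathematical heart of the matter, not a routine verification.

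The paper's proof avoids this entirely and is much shorter: following the proof of \cite[Corollary~C-III-4.3]{Arendt1986}, it suffices to know that $\fix T_t$ is a Banach lattice for every $t$, which is exactly what Theorem~\ref{thm:main} supplies. The simplification you are missing is that the continuous parameter lets you choose the time, not the power: for $\beta \neq 0$ take $t_0 = 2\pi/\beta$, so that $T_{t_0}x = x$ and hence $x$ lies in $F := \fix(T_{t_0})$, a Banach lattice by Theorem~\ref{thm:main}. The restricted semigroup on $F$ is periodic with period $t_0$, so each $T_s|_F$ is invertible with positive inverse $T_{t_0 - s}|_F$, i.e.\ a lattice isomorphism, and for (semi)groups of lattice homomorphisms the cyclicity of the point spectrum of the generator is classical; this yields $ik\beta \in \specPnt(A)$ for all $k \in \bbZ$ (the case $\beta = 0$ being trivial). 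If you want a complete argument, that reduction is the route to take; as it stands, your proposal has a genuine gap exactly where you locate it.
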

\begin{proof}
	As explained in the proof of \cite[Corollary~C-III-4.3]{Arendt1986}, it suffices to prove that, for each time $t$, the fixed space of $T_t$ is a Banach lattice. 
	In our situation, this is the case due to Theorem~\ref{thm:main}.
\end{proof}

Similar cyclicity results for the peripheral point spectrum of generators of positive semigroups can be found, under different sets of assumptions, in \cite[Corollary~C-III-4.3]{Arendt1986}.
On the other hand, for various counterexamples which show that such a result does not hold without any assumptions on the semigroup and the underlying space, we refer to \cite[Examples~B-III-2-13 and~C-III-4.4]{Arendt1986} and \cite[Example~8.2]{Glueck2016}.

Most likely, a similar dimension estimate as in Corollary~\ref{cor:pps-single-operator} can also be shown in the semigroup case.
However, we refrain from discussing this in detail here since (i) for the corresponding dimension estimate for semigroups of lattice homomorphisms that would be needed in the proof, no such convenient estimate as \cite[Proposition~3.1]{Glueck2016} seems to be available, and (ii) we prefer not to divert too much from our focus on the fixed space of positive operators.

\section{Counterexamples}
\label{sec:examples}

Now we give a few counterexamples to demonstrate some limits of our results and arguments from the previous section.

The sequence space $c_0$ has order continuous norm, so it has the Fatou property and is Dedekind complete; but it is not monotonically complete.
So Theorem~\ref{thm:main} is not applicable on this space -- and indeed, the assertions of the theorem do not hold there.
This can be concluded from \cite[Example~6.2 and Remark~6.3]{Glueck2016}, but the example there is more complicated than needed in the present paper (since this example was constructed as a counterexample for a different question).
Here is a simpler version of this example that suffices for our purposes.

\begin{example} \label{ex:contraction-on-c_0}
	Let $E = c_0(\bbN_0 \cup \{-1,-2\})$ denote the space of scalar-valued sequences, indexed over $\bbN_0 \cup \{-1,-2\}$, which converge to $0$.
	As is common, we endow this space with the sup norm. There is a positive, contractive operator $T \in \calL(E)$ whose fixed space $F := \ker(1-T)$ is not a lattice subspace of $E$.
	
	Indeed, define $T$
	\begin{align*}
		(Tf)(k) =
		\begin{cases}
			f(k-1) \quad & \text{if } k \in \bbN \\
			\frac{1}{2}\big(f(-1)+f(-2)\big) \quad & \text{if } k = 0 \\
			f(k) \quad & \text{if } k \in \{-1,-2\}
		\end{cases}
	\end{align*}
	for all $f \in E$ and all $k \in \bbN$. Clearly, $T$ is a positive, contractive linear operator on $E$ and one immediately checks that the fixed space of $T$ is given by
	\begin{align*}
		F = \{f \in E| \, f(k) = 0 \text{ for all } k \in \bbN_0 \text{ and } f(-1) = -f(-2) \}.
	\end{align*}
	In particular, $F$ is non-zero, but it does not contain any positive vectors except $0$. Hence, $F$ cannot be a lattice subspace of $E$. 
\end{example}

In the preceding example, we argued that $F$ is not a lattice subspace since the cone in $F$ is too small (in this particular example, it is even $\{0\}$).
One might thus wonder whether at least the linear span of the positive fixed vectors of $T$ always form a lattice. 
On the space $c_0$, and more generally on Banach lattices with order continuous norm, the answer is positive. This was proved in the main result of \cite{Christianson1989} (see also \cite{Christianson1995}). 
It is worthwhile pointing out that the span of the positive fixed vectors need not be closed, in general (see \cite[Example~4]{Christianson1989}); however, as also shown in the main theorem in \cite{Christianson1989}, it can always be endowed with a stronger norm which renders it a Banach lattice.

In the proof of Theorem~\ref{thm:main}(d) we obtained the supremum of a subset $G$ of $\fix T$ by an application of Zorn's lemma; 
as explained in Remark~\ref{rems:main-theorem}(d), if $T$ is a single operator, this can also be done by iterating $T$ transfinitely often on the supremum of $G$ within $E$. One might wonder whether this transfinite construction is really necessary, or if the stationary point is in fact always reached at the first infinite ordinal number. Let us demonstrate by a simple example that one indeed needs limit steps in the transfinite induction, in general.

\begin{example} 
	\label{ex:necessity-of-transfinite-construction}
	(a) 
	We begin with a preliminary example which will be needed in our actual example (b) below. Consider the operator $S$ on $\bbR^3$ whose representation matrix with respect to the canonical basis is given by
	\begin{align*}
		\begin{pmatrix}
			1 & 0 & 0 \\
			\frac{1}{3} & \frac{1}{3} & \frac{1}{3} \\
			0 & 0 & 1
		\end{pmatrix}.
	\end{align*}
	This operator is taken from \cite[Example~2.13(a)]{Glueck2016}; it is a Markov operator on $\bbR^3$ whose fixed space $\ker(1-S)$ is spanned by the vectors $\mathbbm{1}_3 := (1,1,1)$ and $\hat f :=(1,0,-1)$. Note that $\ker(1-S)$ is a lattice subspace of $\bbR^3$ by Theorem~\ref{thm:main}(a) (or by \cite[Theorem~2.1]{Glueck2016}), however it is not a sublattice of $\bbR^3$ as was pointed out in \cite[Example~2.13(a)]{Glueck2016} (and as is easy to see). 

	(b) 
	Let $E = \bbR^3 \times \ell^\infty \times \ell^\infty$ be endowed with the canonical order and the supremum norm; we use $\bbN = \{1,2,\dots\}$ as index set for $\ell^\infty$, i.e., vectors $g \in \ell^\infty$ are given by $g = (g_n)_{n \in \bbN} = (g_1, g_2, \dots)$. 
	The space $E$ is a dual Banach lattice (and also isometrically lattice isomorphic to $C(K)$ for a Stonian compact Hausdorff space $K$); we can therefore apply Theorem~\ref{thm:main} on this space. 
	Now, fix a free ultrafilter $\calU$ on $\bbN$ and define an operator $T \in \calL(E)$ in the following way: For each $(f,g,h) \in E$ we define $T(f,g,h) = (f',g',h')$, where
	\begin{align*}
		f' = Sf, \qquad g' = (\frac{f_1+f_3}{2},g_1,g_2,g_3,...), \qquad h' = (\lim_\calU g, h_1,h_2,h_3,...);
	\end{align*}
	here, $S \in \calL(\bbR^3)$ is the operator from example (a). It is easy to check that $T$ is positive and that $r(T) = \|T\| = 1$. Moreover, the fixed space $F := \ker(1-T)$ is easily seen to be spanned by the two vectors $(\mathbbm{1}_3, \mathbbm{1}_\bbN, \mathbbm{1}_\bbN) \in E$ and $(\hat f, 0, 0) \in E$, where $\mathbbm{1}_3$ and $\hat f$ are defined as in example (a).
	
	Let us now demonstrate how the transfinite argument mentioned in Remark~\ref{rems:main-theorem}(d) yields the supremum of $G = \{(\hat f, 0, 0), (-\hat f,0,0)\}$ in $F$.
	To this end, let $g_E = (|\hat f|, 0, 0)$ be the supremum of $G$ in $E$.
	By iterating $T$ on $g_E$ and taking the supremum we obtain the vector $g_1 := \sup_{n \in \bbN_0} g_E = (\mathbbm{1}_3, \mathbbm{1}_\bbN, 0)$.
	This is only a super fixed point of $T$, but not a fixed point -- so we continue be iterating $T$ on $g_1$.
	Thus, we obtain the supremum $g_2 := \sup_{n \in \bbN_0} T^n g_1 = (\mathbbm{1}_3,\mathbbm{1}_\bbN, \mathbbm{1}_\bbN)$, which is indeed a fixed point of $T$. 
	Hence, the supremum of $G$ within $F$ (which is the modulus of $(\hat f, 0, 0)$ within $F$) is the vector $g_2 = (\mathbbm{1}_3,\mathbbm{1}_\bbN, \mathbbm{1}_\bbN)$.
\end{example}

It is natural to ask whether the assertions of Theorem~\ref{thm:main} and Corollary~\ref{cor:pps-single-operator} remain true of we replace the condition $\|T\| = 1$ by, say, power-boundedness of $T$. The following example shows that the answer is negative.

\begin{example} 
	\label{ex:power-bbd-is-not-sufficient-for-cyclic-pps}
	There is a Dedekind complete $C(K)$-space $E$ and a positive operator $T \in \calL(E)$ such the following assertions hold:
	\begin{enumerate}[(a)]
		\item 
		The operator $T$ is power bounded and has spectral radius $1$.
		
		\item 
		The number $-1$ is an eigenvalue of $T$, but $1$ is not.
		
		\item 
		The fixed space of $T^2$ is not a lattice subspace of $E$.
	\end{enumerate}
	Indeed, consider the space $E = \ell^\infty(\{-2,-1\} \, \cup \, (\bbN_0 \times \bbN_0))$, where $\{-2,-1\} \, \cup \, (\bbN_0 \times \bbN_0)$ denotes the index set of vectors in $E$. 
	The space $E$ is isometrically lattice isomorphic to an order complete $C(K)$-space. Fix a free ultra filter $\calU$ on $\bbN_0$. We define an operator $T \in \calL(E)$ in the following way: for $f \in E$, let
	\begin{align*}
		(Tf)(-2) & = f(-1), \\
		(Tf)(-1) & = f(-2), \\
		(Tf)(k,j) & =
		\begin{cases}
			\frac{1}{2}(f(-2) + f(-1)) \quad & \text{if } k=j=0 \\
			2\lim_{m \to \calU} f(k-1,m) \quad & \text{if } k \ge 1 \text{ and } j=0 \\
			f(k,j-1) \quad & \text{if } j \ge 1.
		\end{cases}
	\end{align*}
	This is clearly a positive linear operator on $E$ with $\|T\| = 2$. 
	
	Let us show that $T$ satisfies the claimed properties (a)--(c):
	
	(a) It is easy to see that we have $\|T^n\| = 2$ for every $n \in \bbN$. Hence, $T$ is power bounded and has spectral radius $1$. 
	
	(b) Let $f \in E$ be given by $f(-2) = 1$, $f(-1) = -1$ and $f(k,j) = 0$ for all $(k,j) \in \bbN_0 \times \bbN_0$. Then we have $Tf = -f$ and thus, $-1$ is an eigenvalue of $T$.
	
	On the other hand, assume for a contradiction that $1$ is an eigenvalue of $T$ and that $g \in E$ is a corresponding eigenfunction. If $g(-2) = 0$, then we can readily check that $g$ must be constantly $0$, which is a contradiction. Hence, $g(-2) \not= 0$ and we may therefore assume that $g(-2) = 1$. Thus, $g(-1) = 1$, too and this yields by a (double) induction that $g(k,j) = 2^k$ for all $(k,j) \in \bbN_0 \times \bbN_0$. This is a contradiction since $g$ is bounded. Hence, $1$ is not an eigenvalue of $T$.
	
	(c) It is not difficult to see that $\ker(1-T^2) = \ker(-1-T) \oplus \ker(1-T)$ (this is true for every linear operator). Hence, it follows from (b) that $\ker(1-T^2) = \ker(-1-T)$ and the latter space is easily seen to be spanned by the vector $f$ given in (b). Thus, $\ker(1-T^2)$ is non-zero, but it does not contain any non-zero positive vectors. Therefore, it cannot be a lattice subspace of $E$.
\end{example}

It is instructive to observe how the transfinite argument from Remark~\ref{rems:main-theorem}(d) fails for the operator $T^2$ in the above example: for example, let $f\in \ker(-1-T) = \ker(1-T^2)$ be the vector given in the proof of property~(b) of the example. 
If we wanted to follow the argument from Remark~\ref{rems:main-theorem}(d) in order to construct a supremum of $\{-f,f\}$ in the fixed space $F$ of $T^2$, then we had to iterate the operator $T^2$ on $|f|$, take the supremum of $(T^{2n}\modulus{f})_{n \in \bbN_0}$, iterate $T^2$ on this supremum again, and so on.
For each $k \in \bbN$, let $g_k$ denote the vector that one thus obtains by taking the supremum for the $k$-th time. Then it is easy to see that $g_k$ doubles its norm for each $k$, so the sequence $(g_k)_{k \in \bbN}$ is unbounded and thus does not have a supremum. Hence, the construction cannot be continued.

The situation changes of we strengthen the assumptions on the Banach lattice $E$. 
For instance, if $E$ is a KB-space, then it is easy to see that the fixed space of every power bounded positive operator $T$ on $E$ is a lattice subspace (though one might need to switch to an equivalent norm in order to render it even a Banach lattice): this follows from the argument given in the first part of the introduction.
However, we find it worthwhile to point out that, even in finite dimensions, one cannot expect the fixed space to be a lattice subspace if one also drops the condition of power boundedness.
This can be seen in the following simple example.

\begin{example}
	There is a (non-power bounded) positive linear operator $T$ on $E = \bbR^3$ with spectral radius $1$ such that the fixed space $\ker(1-T)$ is not a lattice subspace of $\bbR^3$: Indeed, let 
	\begin{align*}
		T = 
		\begin{pmatrix}
			1 & 0 & 0 \\
			1 & 1 & 1 \\
			0 & 0 & 1
		\end{pmatrix}.
	\end{align*}
	Then one easily computes that the spectrum of $T$ is $\sigma(T) = \{1\}$ (hence, the eigenvalue $1$ has algebraic multiplicity $3$) and that the fixed space $\ker(1-T)$ of $T$ is spanned by the vectors $v_1 = (1,0,-1)$ and $v_2 = (0,1,0)$ (hence, the geometric multiplicity of the eigenvalue $1$ is only $2$, which implies that $T$ is not power bounded). We can immediately conclude from the form of $v_1$ and $v_2$ that the positive cone $\bbR^3_+ \cap \ker(1-T)$ is not generating in $\ker(1-T)$, and hence $\ker(1-T)$ cannot be a lattice subspace of $\bbR^3$.
\end{example}

\bibliographystyle{plain}
\bibliography{literature}

\end{document}